\documentclass[10pt]{article}
\usepackage[toc,page]{appendix}
\usepackage{amsthm,amssymb}
\usepackage{amsmath}
\usepackage{commath}
\usepackage{fnpct}
\usepackage{graphicx,tikz,pgf}
\usepackage{color}
\usepackage{xcolor}
\usepackage{caption}
\usepackage{xpatch}
\usepackage[T2A,T1]{fontenc}
\usepackage[utf8]{inputenc}
\usepackage[russian,english]{babel}
\usepackage{subcaption}
\usepackage{fancyhdr,cancel,enumerate,array,booktabs,setspace}
\usetikzlibrary{patterns,arrows,decorations.pathreplacing}
\usepackage{mathrsfs}
\usetikzlibrary{arrows}
\usepackage{epic}
\usepackage{eepic}
\usepackage{shadow}
\definecolor{xdxdff}{rgb}{0,0,0}
\definecolor{qqqqff}{rgb}{0,0,0}
\definecolor{uuuuuu}{rgb}{0,0,0}
\definecolor{ududff}{rgb}{0,0,0}
\textwidth=14.5 cm

\textheight=19.5 cm

\topmargin=0.mm

\headheight=0.mm

\oddsidemargin=0.cm

\evensidemargin=0.cm

\date{ }
\theoremstyle{definition}
\xpatchcmd{\proof}{\itshape}{\normalfont\proofnamefont}{}{}
\newcommand{\proofnamefont}{}
\renewcommand{\proofnamefont}{\bfseries}
\newcommand{\RNum}[1]{\uppercase\expandafter{\romannumeral #1\relax}}

\newtheorem{definition}{Definition}
\newtheorem{remark}[definition]{Remark}
\newtheorem{example}[definition]{Example}

\newtheorem{theorem}[definition]{Theorem}
\newtheorem{lemma}[definition]{Lemma}

\newcommand{\emat}[1]{\mathcal{E}(#1)}
\newcommand{\se}[2][]{\mathcal{E}_{#1}(\mathcal{S}(#2))}
\newcommand{\sm}[1]{\mathcal{S}(#1)}
\newcommand{\nop}[1]{s(#1)}

%

\title{\bf An improved lower bound for \\the Seidel energy of tree graphs}
\author{\bf \small{M. Einollahzadeh$^{{\rm a}}$ and M.A. Nematollahi$^{{\rm b}}$}\footnote{
		Corresponding author.\newline
		E-mail addresses:
		m$\_$einollahzadeh@cc.iut.ac.ir (M. Einollahzadeh),  ma.nematollahi@fasau.ac.ir (M.A. Nematollahi)}
	\\[2mm]
	${\rm ^{a}}$\small Department of Mathematical Sciences, Isfahan University of Technology, Isfahan, Iran
	\\
	${\rm ^{b}}$\small Department of Computer Sciences, Fasa University, Fasa, Iran
	}

\begin{document}

\maketitle
\begin{abstract}
	Let $G$ be a graph with the vertex set $ \lbrace v_1,\ldots,v_n \rbrace$.
	The Seidel matrix of $G$ is an $n\times n$ matrix whose diagonal entries are zero, $ij$-th entry is $-1$ if
	$ v_{i} $ and $ v_{j} $ are adjacent and otherwise is
	$ 1 $. The Seidel energy of $G$, denoted by $ \se{G} $, is defined to be the sum of absolute values of all eigenvalues of the Seidel matrix of $G$. In \cite{aekn}, the authors proved that the Seidel energy of any graph of order $n$ is at least $2n-2$. In this study, we improve the aforementioned lower bound for tree graphs.
\end{abstract}
\vskip 3mm

\noindent{\bf Keywords: }Seidel Matrix, Seidel Energy, Tree Graphs.
\vskip 3mm

\noindent{\bf 2010 AMS Subject Classification Number:} 05C50, 05C05, 15A18.
\section{Introduction and Terminology}
Throughout this paper all graphs we consider are simple and finite. For a graph $ G $, we denote the set of vertices and edges of $ G $ by $ V(G) $ and $ E(G) $, respectively. The \textit{order} of a graph is its number of vertices and the \textit{size} of a graph is its number of edges. The complement of $ G $ is denoted by $ \overline{G} $. Also, the $ n$-vertex complete graph, path graph and star graph are denoted by $ K_{n} $, $ P_{n} $ and $ S_{n} $, respectively. The \textit{distance} between two vertices in a graph is the number of edges in a shortest path connecting them. Moreover, $ \delta(G) $ and $ \Delta(G) $ represent the minimum degree and the maximum degree of $ G $, respectively.

For every Hermitian matrix $A$ the \textit{energy} of $A$, $\emat{A}$,  is defined to be sum of the absolute values of the eigenvalues of $A$. The well-known concept of energy of a graph $ G $, denoted by $ \mathcal{E}(G) $, is the energy of its adjacency matrix.

Let $ G $ be a graph and $ V(G)=\lbrace v_{1}, \ldots , v_{n} \rbrace $. The \textit{Seidel matrix} of $ G $, denoted by $ \sm{G} $, is an $ n \times n $ matrix whose diagonal entries are zero, $ij$-th entry is $-1$ if $ v_{i} $ and $ v_{j} $ are adjacent and otherwise is $ 1 $ (It is noteworthy that at first, van Lint and Seidel introduced the concept of Seidel matrix for the study of equiangular lines in \cite{lint-seidel}). The \textit{Seidel energy} of $ G $ is defined to be $\se{G}$. Moreover, the \textit{Seidel switching} of $ G $ is defined as follows: Partition $ V(G) $ into two subsets $ V_{1} $ and $ V_{2} $, delete the edges between $ V_{1} $ and $ V_{2} $ and join all vertices $ v_{1} \in V_{1} $ and $ v_{2} \in V_{2} $ which are not adjacent. Therefore, if we call the new graph by $ G^{\prime} $, then we have $\sm{G^{\prime}}= D \sm{G} D $, where $ D $ is a diagonal matrix with entries 1 (resp. $ -1 $) corresponding to the vertices of $ V_{1} $ (resp. $ V_{2} $) (\cite{Ham}). Hence, $ \sm{G} $ and $ \sm{G^{\prime}} $ are similar and they have the same Seidel energy. Note that if one of the $ V_{1} $ or $ V_{2} $ is empty, then $ G $ remains unchanged and also, for every $ v \in V(G) $, using a Seidel switching on $ G $, one can convert $ v $ to an isolated vertex. Two graphs $ G_{1} $ and $ G_{2} $ are called \textit{SC-equivalent} if $ G_{2} $ is obtained from $G_1$ or $ \overline{G_{1}} $ by a Seidel switching and is denoted by $ G_{1} \cong G_{2} $. Note that in either cases, $ \sm{G_{2}} $ is similar to $ \sm{G_{1}} $ or $ -\sm{G_{1}} $, hence $ \se{G_{1}}=\se{G_{2}} $. 

If $ X $ and $ Y $ are two disjoint subsets of $ V(G) $, the set of edges of $ G $ with one endpoint in $ X $ and another in $ Y $ is denoted by $ E(X, Y) $. An ordered pair $ (X,Y) $ of disjoint subsets of $ V(G) $ with $ \vert X \vert = \vert Y \vert =2 $, is called an \textit{odd pair} if $ \vert E(X,Y) \vert $ is an odd number (which is either 1 or 3). One can easily see that applying a Seidel switching on an arbitrary graph $ G $ does not change its odd pair(s). A subset $ \lbrace u, v \rbrace \subseteq V(G) $ is called an \textit{odd set} if $ \lbrace u, v \rbrace $ is the first component of an odd pair of $ G $. We denote the number of odd pairs in $ G $ by $ \nop{G} $.

From a graph $ G $, we construct a graph denoted by $ \Lambda(G) $, as follows: $ V(\Lambda(G)) = V(G)$ and $ E(\Lambda(G))  $ consists of all the edges $ e = uv $ such that $ \lbrace u, v \rbrace $ is an odd set of $ G $. By $ \lambda(v) $ we denote the degree of vertex $ v \in V(G) $ in the graph $ \Lambda(G) $ 
\begin{example}
	The following identities hold which can be easily checked:
	\begin{enumerate}
		\item[1.] 
		$ \Lambda(C_{4}) = \overline{K_{4}} $. 
		\item[2.]
		$ \Lambda(P_{4}) = C_{4} $.
		\item[3.]
		$ \Lambda(C_{5}) = K_{5} $.
		\item[4.]
		$ \Lambda(K_{n}) = \overline{K_{n}} $.
	\end{enumerate}
\end{example}

Haemers in \cite{Ham} introduced the concept of Seidel energy of a graph and he conjectured that for every graph $ G $ of order $ n $, 
\begin{equation}\label{lb}
\se{G} \geq \se{K_{n}} = 2n-2.
\end{equation}
Several attempts were made to prove the conjecture (see for example \cite{gho} and \cite{obu}) and the conjecture was proved in \cite{aekn}. Here, we strengthen inequality \eqref{lb} for tree graphs.

\section{Main Theorem}

In this section, we propose some lemmas to prove the main result of the paper. First, we express the following lemma which deals with the non-adjacent vertices in $ \Lambda(T) $, where $ T $ is a tree of order at least 4.
\begin{lemma}\label{nl-1}
	Let $ T $ be a tree of order $ n \geq 4 $. Assume that $ u $ and $ v $ are two vertices of $ T $ which are non-adjacent in $ \Lambda(T) $. Then, exactly one of the following cases occurs:
\vskip 2mm
\noindent \textbf{Case 1.} $ u $ and $ v $ are two leaves, both connected to another vertex of $ T $. 
\vskip 2mm
\noindent \textbf{Case 2.} $ T $ is the tree graph depicted in Figure \ref{Case2}.

\begin{figure}[h]

\centering
\includegraphics[]{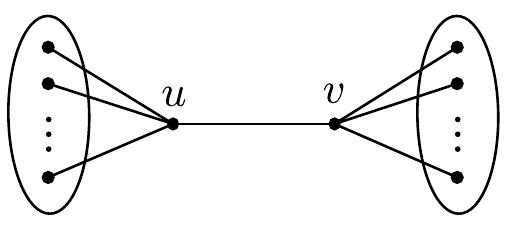}
\caption{Case 2}
\label{Case2}
\end{figure}
\vskip 2mm
\noindent \textbf{Case 3.} $ T $ is the tree graph depicted in Figure \ref{Case3} .

\begin{figure}[h]

\centering
\includegraphics[]{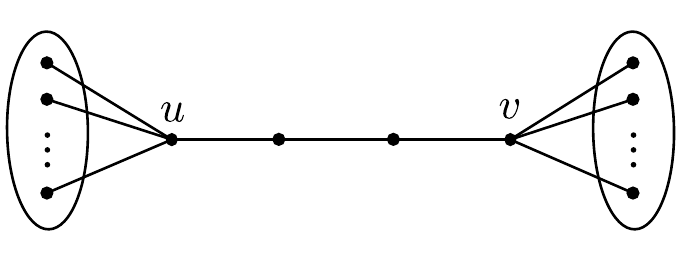}
\caption{Case 3}
\label{Case3}
\end{figure}
\end{lemma}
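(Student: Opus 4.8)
The plan is to reduce everything to a parity statement. For each vertex $w \in V(T)\setminus\{u,v\}$ put $f(w) = \vert E(\{w\},\{u,v\})\vert \in \{0,1,2\}$, the number of edges joining $w$ to $\{u,v\}$. For distinct $w,z \in V(T)\setminus\{u,v\}$ the only candidate edges between $\{u,v\}$ and $\{w,z\}$ are $uw,uz,vw,vz$, so $\vert E(\{u,v\},\{w,z\})\vert = f(w)+f(z)$; hence $(\{u,v\},\{w,z\})$ is an odd pair exactly when $f(w)$ and $f(z)$ have opposite parities. Since $u,v$ are non-adjacent in $\Lambda(T)$, the set $\{u,v\}$ is not an odd set, so $f(w)+f(z)$ is even for every admissible pair; as $n\ge 4$ guarantees at least two such vertices, this is equivalent to $f$ having constant parity on $V(T)\setminus\{u,v\}$. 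The whole argument then splits according to whether this common parity is even or odd.

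If $f$ is everywhere even, every $w\neq u,v$ is adjacent to both of $u,v$ or to neither. Let $C$ be the set of common neighbours. Two distinct common neighbours $w_1,w_2$ would create the $4$-cycle $u\,w_1\,v\,w_2\,u$, contradicting acyclicity, so $\vert C\vert \le 1$. The case $\vert C\vert = 0$ forces $N(u)\subseteq\{v\}$ and $N(v)\subseteq\{u\}$, which cannot yield a connected tree on $n\ge 4$ vertices; hence $\vert C\vert = 1$, say $C=\{c\}$. Then $u\sim v$ is impossible (it would give the triangle $uvc$), and any neighbour of $u$ other than $c$ would lie in $C$, so $u$, and symmetrically $v$, is a leaf whose unique neighbour is $c$. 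This is precisely Case 1.

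If $f$ is everywhere odd, then $f\equiv 1$ and $V(T)\setminus\{u,v\}$ splits as $S_u\sqcup S_v$ with $S_u = N(u)\setminus\{v\}$, $S_v = N(v)\setminus\{u\}$. Counting edges, those incident with $u$ or $v$ number $\vert S_u\vert + \vert S_v\vert + [u\sim v] = (n-2) + [u\sim v]$, writing $[u\sim v]$ for $1$ if $u\sim v$ and $0$ otherwise; letting $m$ be the number of remaining edges and using that a tree has $n-1$ edges gives $[u\sim v] + m = 1$. If $u\sim v$ then $m=0$: no edge lies inside $V(T)\setminus\{u,v\}$, every such vertex is a leaf attached to $u$ or to $v$, and $T$ is the double star of Case 2 (Figure \ref{Case2}). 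If $u\not\sim v$ then $m=1$: deleting the single extra edge $xy$ leaves the two stars on $\{u\}\cup S_u$ and $\{v\}\cup S_v$, so to keep $T$ connected and acyclic $xy$ must join them, forcing $x\in S_u$ and $y\in S_v$; the remaining vertices are leaves on $u$ or $v$, and $T$ is the double broom of Case 3 (Figure \ref{Case3}) with central path $u\,x\,y\,v$.

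These alternatives are mutually exclusive, since Case 1 and Case 3 have $u\not\sim v$ while Case 2 has $u\sim v$, and in Case 3 the vertices $u,v$ share no neighbour whereas in Case 1 they do; thus exactly one case occurs. I expect the main obstacle to be the odd-parity analysis: converting the single bookkeeping identity $[u\sim v]+m=1$ into the rigid double-star and double-broom pictures requires invoking connectivity and acyclicity simultaneously and pinning down the location of the lone extra edge, and one must verify that the degenerate shapes obtained when some leaf-set is empty (a star, or a bare path) are still among the configurations drawn in Figures \ref{Case2} and \ref{Case3}.
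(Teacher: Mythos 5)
Your proof is correct, and it takes a genuinely different route from the paper's. The paper argues by case analysis on the distance $d_T(u,v)\in\{1,2,3,\geq 4\}$: for $d\geq 4$ it exhibits an explicit odd pair to reach a contradiction, and for each of $d=3,2,1$ it constructs odd pairs ad hoc to show that every remaining vertex attaches to exactly one of $u,v$ (or to both/neither when $d=2$), which yields Figure \ref{Case3}, Case 1, and Figure \ref{Case2}, respectively. You instead isolate a single parity invariant --- non-adjacency of $uv$ in $\Lambda(T)$ forces $f(w)=\vert E(\{w\},\{u,v\})\vert$ to have constant parity on $V(T)\setminus\{u,v\}$ --- and then split on that parity: the even class gives Case 1 via acyclicity, while the odd class ($f\equiv 1$), combined with the tree edge count $\vert E(T)\vert = n-1$, gives the identity $[u\sim v]+m=1$ that pins down Cases 2 and 3. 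The paper's distance split makes mutual exclusivity automatic and maps each case directly onto a figure; your approach is more uniform --- the $d\geq 4$ contradiction comes for free (a neighbour of $u$ on the $u$--$v$ path and a midpoint vertex would have different parities), the structural rigidity follows from one edge count rather than repeated odd-pair constructions, and the degenerate configurations become visible rather than hidden. On your closing worry about degenerate shapes: the concern is real but is not a defect of your argument relative to the paper's, since the paper's own $d=1$ analysis equally produces the star $S_n$ (when all remaining vertices attach to $u$) and its $d=3$ analysis produces $P_4$; moreover, the way Lemma \ref{ntsize} invokes Lemma \ref{nl-1} (treating the star graph alongside the Type 1 and Type 2 families) confirms that Figures \ref{Case2} and \ref{Case3} are intended to include these degenerate instances, so your reduction to those two pictures is faithful to the lemma as the paper uses it.
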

\begin{proof}
	Let $ d $ be the distance between $ u $ and $ v $ in $ T $. Then there exists a path $ P $ as an induced subgraph of $ T $ which connects $ u $ and $ v $.
	
	If $ d \geq 4 $, then suppose that $ w $ is the vertex adjacent to $ u $ in $ P $ and $ x $ is another vertex in $ P $ which is neither connected to $ u $ nor $ v $ (since $ d \geq 4 $, $ x $ exists). Therefore, $ (\lbrace u, v \rbrace, \lbrace w, x \rbrace) $ is an odd pair, a contradiction.
	
	If $ d = 3 $, then every vertex of $ V(T)\setminus V(P) $ is adjacent to exactly one of the two vertices $ u $ and $ v $; because if a vertex $ w $ is connected to both $ u $ and $ v $, we have cycle in $ T $, a contradiction. On the other hand, if there exist some vertices in $ T $ which are neither connected to $ u $ nor $ v $, without loss of generality, there is a vertex $ x $ which has distance 2 from $ u $ ($ x $ and $ v $ are non-adjacent). Let $ w $ be the vertex joining $ u $ and $ x $ together. Then $ (\{ u, v \}, \{w, x\}) $ is an odd pair of $ T $, a contradiction. Hence, $ T $ is the tree depicted in Figure \ref{Case3}.
	
	Next, assume that $ d = 2 $. Hence, there is a vertex in $ T $ which connects $ u $ to $ v $. Name it $ w $. Then every vertex of $ T $ is either connected to both $ u $ and $ v $ or is neither connected to $ u $ nor $ v $; otherwise, without loss of generality, there exists a vertex $ x $ which is connected to $ u $ and, $ x $ and $ v $ are non-adjacent, which implies that $ (\{u, v\}, \{w, x\}) $ is an odd pair, a contradiction. Moreover, since $ T $ is a tree, $ u $ and $ v $ have exactly one common neighbor (the vertex $ w $). Therefore the first case occurs.
	
	Finally, suppose that $ d = 1 $. So, $ u $ and $ v $ are adjacent. Now, similar to the discussion in case $ d = 3 $, it can be verified that the assumptions of lemma imply that $ T $ is the tree depicted in Figure \ref{Case2}. 

\end{proof}
\begin{definition}
	Let $ T $ be a tree graph. Then, by $ D(T) $ or simply $ D $, we mean the maximum number of leaves which are connected to a vertex of $ T $. Obviously, $$ 1 \leq D(T) \leq \Delta(T),$$
	which are achieved (for example) by the path graphs and the star graphs, respectively.
\end{definition}
Now, we define two types of trees which will be used in the sequel:
\vskip 2mm
\noindent \textbf{Type 1:} The family of trees of order $ a + b + 2 $ depicted in Figure \ref{Type1}, where $ a \geq b \geq 1 $ are integers.

\begin{figure}[h]

\centering
\includegraphics[]{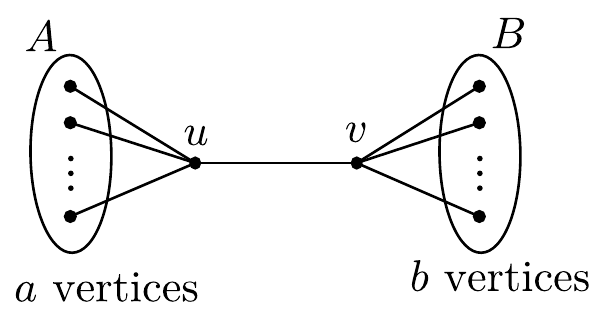}
\caption{Type 1}

\label{Type1}
\end{figure}
\vskip 2mm
\noindent \textbf{Type 2:} The family of trees of order $ a + b + 4 $ depicted in Figure \ref{Type2}, where $ a \geq b $ are \\non-negative integers and $ (a, b) \neq (0, 0) $. (Note that here, the case $ (a, b) = (0, 0) $ yields a Type 1 tree with parameters $ a = b = 1 $.)

\begin{figure}[h]

\centering
\includegraphics[]{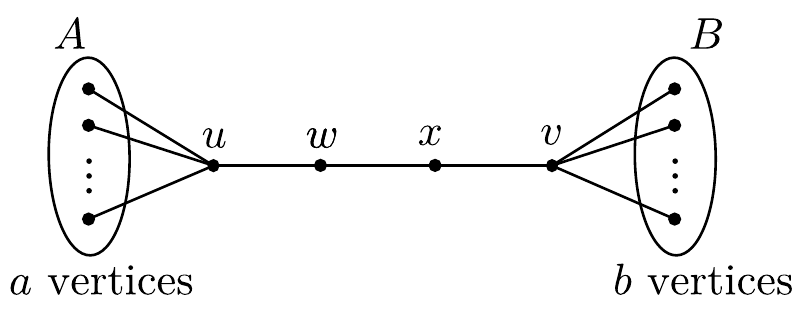}
\caption{Type 2}

\label{Type2}
\end{figure}

\begin{lemma}\label{ntsize}
	Let $ T $ be a tree of order $ n \geq 4 $ which is not $ S_{n} $, $ P_{4} $, $ P_{5} $ and $ P_{6} $. Then we have
	$$ \vert E(\Lambda(T))\vert \geq \frac{1}{2}n(n - D(T)). $$
\end{lemma}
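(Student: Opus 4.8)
The plan is to pass to the complement of $\Lambda(T)$ and count non-edges. Writing $N$ for the number of pairs $\{u,v\}$ that are non-adjacent in $\Lambda(T)$, we have $|E(\Lambda(T))| = \binom{n}{2} - N$, and since $\binom{n}{2} - \frac{1}{2}n(n-D) = \frac{1}{2}n(D-1)$, the claimed inequality is equivalent to the upper bound $N \le \frac{1}{2}n(D-1)$. Thus the whole problem reduces to counting the non-edges of $\Lambda(T)$, and Lemma \ref{nl-1} is precisely the tool that describes what those non-edges can be.

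First I would dispose of the generic situation, namely when $T$ is neither a Type 1 nor a Type 2 tree. Since $T \ne S_n$, Lemma \ref{nl-1} forces every non-adjacent pair of $\Lambda(T)$ into Case 1, i.e.\ every non-edge consists of two leaves sharing a common neighbor. I would then organize these pairs by their common neighbor: if a vertex $c$ is adjacent to exactly $\ell_c$ leaves, it accounts for $\binom{\ell_c}{2}$ non-edges, and distinct common neighbors give disjoint families, so $N = \sum_c \binom{\ell_c}{2}$. Using $\ell_c \le D$ for every $c$ gives $\binom{\ell_c}{2} = \frac{\ell_c(\ell_c-1)}{2} \le \frac{(D-1)\ell_c}{2}$, and summing yields $N \le \frac{1}{2}(D-1)\sum_c \ell_c$. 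As $\sum_c \ell_c$ is just the total number of leaves of $T$, which is at most $n-1$, this produces $N \le \frac{1}{2}(D-1)n$, as required.

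The remaining case, where $T$ is a Type 1 or Type 2 tree, is the delicate one and is exactly where the hypotheses $T \ne P_4, P_5, P_6$ are used. For these trees Lemma \ref{nl-1} permits one further non-edge beyond the Case 1 leaf-pairs: the two central vertices of the double star, respectively the two distinguished endpoints at distance $3$ of the double broom. I would verify directly, since these are explicit one- or two-parameter families, that the non-edges are exactly the leaf-pairs together with this single extra pair, so that with $s \ge t$ denoting the two leaf-counts one has $N = \binom{s}{2} + \binom{t}{2} + 1$ and $D = s$. Excluding $P_4, P_5, P_6$ guarantees $s \ge 2$, hence $D \ge 2$; combining this with the fact that a Type 1 or Type 2 tree has at least two non-leaf vertices (so the number of leaves is at most $n-2$), the extra ``$+1$'' is absorbed and $N \le \frac{1}{2}n(D-1)$ follows from a short direct estimate.

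I expect the main obstacle to be the bookkeeping in this last case rather than any deep idea: one must check that the Type 1 and Type 2 trees contribute exactly one extra non-edge (no cross pairs between the two leaf-clusters, and no further adjacent or distance-$3$ non-edges), and one must confirm that the excluded small paths are precisely the degenerate members of these families for which $D = 1$ and the bound genuinely fails. The generic counting step is routine convexity once Lemma \ref{nl-1} is invoked; the real care lies in matching the excluded graphs $S_n, P_4, P_5, P_6$ to the degenerate cases of the classification so that nothing slips through.
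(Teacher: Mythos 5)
Your proposal is correct and follows essentially the same route as the paper: both invoke Lemma \ref{nl-1} to classify the non-edges of $\Lambda(T)$, split into the generic case versus the Type 1/Type 2 trees, and then count, with the exclusion of $P_4$, $P_5$, $P_6$ used exactly as you use it (to force $D=a\geq 2$ so the one extra non-edge is absorbed). The only cosmetic difference is that you count non-edges of $\Lambda(T)$ (bounding $N \leq \tfrac{1}{2}n(D-1)$) while the paper bounds the degrees $\lambda(w) \geq n-D$ and sums them; these are the same estimate written in complementary form.
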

\begin{proof}
	For simplicity, we denote $ D(T) $ by $ D $. Let $ u $ and $ v $ be arbitrary vertices of $ T $. By Lemma \ref{nl-1}, if $ T $ is not any of the star graph or graph of Type 1 or graph of Type 2, then $ uv \notin E(\Lambda(T)) $ implies that $ u $ and $ v $ are two leaves, both connected to another vertex of $ T $. Hence, if $ w $ is not a leaf, then $ \lambda(w) = n - 1$. On the other hand, if $ w $ is a leaf, assume that $ w $ is connected to a vertex $ v $. Then, $ \lambda(w) = n - 1 - (k - 1) = n - k $, where $ k $ is the number of leaves adjacent to $ v $. Therefore, in either case, $ \lambda(w) \geq n - D $, which implies that $$ \vert E(\Lambda(T))\vert \geq \frac{1}{2}n(n - D). $$
	
	Next, suppose that $ T $ is of Type 1. Therefore $ D(T) = a $. Since $ T \neq P_{4} $, we have $ a > 1 $. In this case, $D=a$ and 
	
	\begin{equation*}
	xy \notin E(\Lambda(T)) \Leftrightarrow \left(  \{x, y\} =\{u, v\} \quad\text{or}\quad x,y \in A \quad\text{or}\quad x,y \in B  \right).
	\end{equation*}
	Hence,
	\begin{align*}
	2 \vert E(\Lambda(T)) \vert & = n(n-1) - 2 - a(a-1) - b(b-1)\\
	& \geq n(n-1) - 2(a-1) -a(a-1) -b(a-1) \\
	& \geq n(n-1) - n(a-1) = n(n-a),
	\end{align*}
	which yields
	\begin{equation*}
	\vert E(\Lambda(T)) \vert \geq \frac{1}{2}n(n-a),
	\end{equation*}
	as desired.
	
	Finally, suppose that $ T $ is of Type 2 and $ T \neq P_{5}, P_{6} $. So, $ a \geq 2 $ and $ D = a $. Therefore,
	\begin{equation*}
	xy \notin E(\Lambda(T)) \Leftrightarrow \left( \{x, y\} =\{u, v\} \quad\text{or}\quad x,y \in A \quad\text{or}\quad x,y \in B\right).
	\end{equation*}
	Hence,
	\begin{align*}
	2 \vert E(\Lambda(T)) \vert & \geq n(n-1) - 2 - a(a-1) - b(b-1)\\
	& \geq n(n-1) - (a+b+2)(a-1) \\
	& > n(n-1) - n(a-1) = n(n-a),
	\end{align*}
	which implies
	\begin{equation*}
	\vert E(\Lambda(T)) \vert \geq \frac{1}{2}n(n-a),
	\end{equation*}
	as desired.
\end{proof}
The next lemma provides a lower bound for the number of odd pairs in a graph $ G $ which have the same first component.
\begin{lemma}\label{l-2}
	Let $ G $ be a graph of order $ n $ and $ e=uv $ be an edge in $ E(\Lambda(G)) $. Then, there exist at least $ n - 3 $ odd pairs in $ G $ such that their first component is $ X = \lbrace u, v \rbrace $.
\end{lemma}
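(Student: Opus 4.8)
The plan is to compute exactly the number of odd pairs whose first component is $X = \{u, v\}$, and then minimize that count subject to the single hypothesis that $X$ is an odd set. First I would classify every vertex outside $X$ by its adjacencies to $u$ and $v$. For each $w \in V(G) \setminus \{u, v\}$, let $f(w) \in \{0, 1, 2\}$ be the number of edges joining $w$ to $X$. Partition the $n - 2$ vertices outside $X$ into the set $A$ of those with $f(w) = 1$ (adjacent to exactly one of $u, v$) and the set $B$ of those with $f(w) \in \{0, 2\}$ (adjacent to both or to neither), so that $|A| + |B| = n - 2$.

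The key observation is a parity computation: for any two-element set $Y = \{w, w'\}$ disjoint from $X$ we have $|E(X, Y)| = f(w) + f(w')$, and since the only odd value among $\{0,1,2\}$ is $1$, this sum is odd precisely when exactly one of $w, w'$ lies in $A$. Hence $(X, Y)$ is an odd pair if and only if $Y$ has one endpoint in $A$ and the other in $B$. It follows that the number of odd pairs with first component $X$ is exactly $|A| \cdot |B|$, since $A$ and $B$ are disjoint.

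It then remains to bound this product from below. Because $uv \in E(\Lambda(G))$, the set $X$ is an odd set, so at least one odd pair has first component $X$; this forces $|A| \geq 1$ and $|B| \geq 1$. Writing $m = |A|$, so that $|B| = n - 2 - m$ with $1 \leq m \leq n - 3$, the count equals $m(n - 2 - m)$. As a concave function of $m$ on $[1, n-3]$, this attains its minimum at an endpoint, where it equals $1 \cdot (n - 3) = n - 3$. Therefore there are at least $n - 3$ odd pairs with first component $X$.

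I expect the substantive content to be the exact evaluation $|A| \cdot |B|$ through the parity analysis of $f(w) + f(w')$; once this identity is established, the remaining step is an elementary one-variable minimization of a quadratic on an integer interval and presents no real difficulty.
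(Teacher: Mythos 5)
Your proof is correct, and it is organized genuinely differently from the paper's. Both arguments hinge on the same parity fact---what matters about a vertex $w \notin X$ is the parity of the number of edges joining $w$ to $X$---but you use it globally while the paper uses it locally. The paper starts from a single witness odd pair $(X, \{u^{\prime}, v^{\prime}\})$ (which exists because $uv \in E(\Lambda(G))$), notes that $u^{\prime}$ and $v^{\prime}$ attach to $X$ with different parities, and then for each of the $n-4$ vertices $w$ outside $X \cup \{u^{\prime}, v^{\prime}\}$ produces exactly one new odd pair, either $(X, \{u^{\prime}, w\})$ or $(X, \{v^{\prime}, w\})$; together with the witness this yields $n-3$ pairs and the proof stops there. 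You instead partition all of $V(G) \setminus X$ into the odd-parity class $A$ and the even-parity class $B$, observe that the odd pairs with first component $X$ are precisely the pairs crossing the partition, and so obtain the \emph{exact} count $|A| \cdot |B|$, which you then minimize as a quadratic over $1 \leq |A| \leq n-3$. What your route buys is strictly more information: the number of odd pairs with first component $X$ equals $|A|\,|B|$, which is at least $n-3$ with equality only when one class is a singleton, and which can be as large as roughly $(n-2)^{2}/4$; fed into the estimate for $\nop{T}$ in the proof of the main theorem, this could in principle sharpen the final bound for trees that are far from extremal. What the paper's route buys is brevity---it constructs just enough odd pairs and avoids any optimization step. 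One point worth making explicit in your write-up: the hypothesis $uv \in E(\Lambda(G))$ forces $n \geq 4$, so the interval $[1, n-3]$ is nonempty; your argument in fact recovers this, since $|A| \geq 1$ and $|B| \geq 1$ already give $n - 2 \geq 2$.
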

\begin{proof}
	Assume that $ (X, Y) $ is an odd pair of $ G $, where $ Y = \lbrace u^{\prime}, v^{\prime} \rbrace $. Then, $ \vert E(X, Y) \vert $ is an odd number and the parity of $ E(X, \lbrace u^{\prime} \rbrace) $ and $ E(X, \lbrace v^{\prime} \rbrace) $ are different. Hence, for every $ w \in V(G) \setminus (X \cup Y) $, exactly one of the two pairs
	\begin{equation*}
	(X, \lbrace u^{\prime}, w \rbrace) , \quad (X, \lbrace v^{\prime}, w \rbrace) 
	\end{equation*}
	is an odd pair. Therefore, $ G $ has at least $ n - 4 $ odd pairs different from $ (X, Y) $ and the proof is complete.
\end{proof}
\begin{remark}\label{r-aekn}
	We notify that in the procedure of the proof of Theorem 2 of \cite{aekn}, for a graph $ G $ of order $ n \geq 4 $, the inequality
	\begin{equation*}
	\se{G} \geq n - 4 + \sqrt{n^{2} -2n + 4 + 4\sqrt{\frac{3}{4}n^{2} +\nop{G} }}.
	\end{equation*}
	was obtained. With this inequality in hand and the above lemmas, we are ready to prove our main theorem:
\end{remark}
\begin{theorem}\label{nft}
	Let $ T $ be a tree of order $ n$. If $ D = D(T) $, then
	$$ \se{T} \geq 2n-6 + \sqrt{2(n-D)}. $$
\end{theorem}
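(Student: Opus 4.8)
The plan is to combine the three ingredients already assembled: the edge count of $\Lambda(T)$ from Lemma \ref{ntsize}, the per-edge supply of odd pairs from Lemma \ref{l-2}, and the energy estimate recorded in Remark \ref{r-aekn}. First I would dispose of the finitely many exceptional trees excluded from Lemma \ref{ntsize}. For $T \in \{S_{n}, P_{4}, P_{5}, P_{6}\}$ one has $D = n-1$ or $D = 1$, so that $n - D \leq 8$; hence $\sqrt{2(n-D)} \leq 4$ and therefore $2n - 6 + \sqrt{2(n-D)} \leq 2n - 2 \leq \se{T}$, the last inequality being exactly \eqref{lb}. Thus it remains to treat trees to which Lemma \ref{ntsize} applies.

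For such a tree I would first bound the number of odd pairs. Each edge $uv \in E(\Lambda(T))$ determines an odd set $X = \{u, v\}$, and by Lemma \ref{l-2} there are at least $n-3$ odd pairs whose first component is this $X$. Since distinct edges yield distinct first components, these families are pairwise disjoint, and summing over all edges of $\Lambda(T)$ gives
$$ \nop{T} \geq (n-3)\,\vert E(\Lambda(T))\vert \geq \tfrac{1}{2}n(n-D)(n-3), $$
the last step by Lemma \ref{ntsize}. Because the right-hand side of the estimate in Remark \ref{r-aekn} is increasing in $\nop{G}$, substituting this lower bound for $\nop{T}$ produces a valid lower bound for $\se{T}$.

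The main task is then purely algebraic: to verify that
$$ n - 4 + \sqrt{n^{2} - 2n + 4 + 4\sqrt{\tfrac{3}{4}n^{2} + \tfrac{1}{2}n(n-D)(n-3)}} \;\geq\; 2n - 6 + \sqrt{2(n-D)}. $$
I would move $n - 4$ to the right, square once (both sides being positive), and observe that the resulting $(n-2)^{2}$ cancels most of the quadratic terms, reducing the claim to
$$ 2n + 4\sqrt{\tfrac{3}{4}n^{2} + \tfrac{1}{2}n(n-D)(n-3)} \;\geq\; 2(n-2)\sqrt{2(n-D)} + 2(n-D). $$
Writing $A = \sqrt{2n^{3} - (3+2D)n^{2} + 6Dn}$, so that the left square root equals $2A$, and $B = (n-2)\sqrt{2(n-D)}$, the inequality becomes $A \geq B - D$. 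When $B \leq D$ it is immediate; otherwise I would square again and compute $A^{2} - (B-D)^{2} = \big(5n^{2} - (2D+8)n + 8D - D^{2}\big) + 2BD$, and then drop the nonnegative term $2BD$, reducing everything to the single quadratic inequality $g(n,D) := 5n^{2} - (2D+8)n + 8D - D^{2} \geq 0$.

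Finally I would dispatch this quadratic. Since $\partial g / \partial D = -2(n + D - 4) < 0$ for $n \geq 4$ and $D \geq 1$, the function $g$ is decreasing in $D$, so its minimum over the admissible range $1 \leq D \leq n-1$ occurs at $D = n-1$, where $g(n, n-1) = 2n^{2} + 4n - 9 > 0$; this closes the argument. I expect the genuine difficulty to lie not in any single step but in the fact that the two sides of the target inequality agree to leading order $n^{3/2}$, so the cancellations above must be carried out exactly. The pleasant outcome is that after the double squaring the whole problem collapses to a quadratic whose sign is settled by a one-line monotonicity observation.
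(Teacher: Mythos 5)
Your proposal is correct and follows essentially the same route as the paper: the same disposal of the exceptional trees via the bound $\se{T} \geq 2n-2$ of \cite{aekn}, the same count $\nop{T} \geq (n-3)\vert E(\Lambda(T))\vert \geq \tfrac{1}{2}n(n-3)(n-D)$ from Lemmas \ref{ntsize} and \ref{l-2}, and the same substitution into the inequality of Remark \ref{r-aekn}. The only divergence is in the closing algebra: the paper weakens $n(n-3)$ to $(n-2)^2$ and drops the $\tfrac{3}{4}n^{2}$ term so that the quantity under the outer square root becomes the perfect square $\left((n-2)+\sqrt{2(n-D)}\right)^{2}$, whereas you keep all terms and verify the inequality by squaring twice and settling a quadratic in $(n,D)$ --- both verifications are valid.
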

\begin{proof}
	First, note that if $ T $ is a star graph or one of the paths $ P_{4} $, $ P_{5} $ or $ P_{6} $, then by \cite{aekn}, $ \se{T} \geq 2n-2 $ which is greater than the lower bound given in theorem.
	
	So, assume that $n\geq 4$ and $ T $ is not a star graph or a path of order less than 7. By Lemmas \ref{ntsize} and \ref{l-2}, we have
	\begin{align}\label{p6}
	\nop{T} \geq \vert E(\Lambda(T)) \vert (n-3) & \geq \frac{1}{2}n(n-3)(n-D) \nonumber \\
	& \geq \frac{1}{2}(n-2)^{2}(n-D). \qquad \quad (n \geq 4)
	\end{align}
	Now, inequality \ref{p6} and Remark \ref{r-aekn} imply that
	\begin{align*}
	\se{T} & \geq n - 4 + \sqrt{n^{2} - 2n +4+ 4\sqrt{\nop{T}}} \\
	& \geq n - 4 + \sqrt{(n-2)^{2} + 2n + 4\sqrt{\frac{1}{2}(n-2)^{2}(n-D)}} \\
	& \geq n - 4 + \sqrt{(n-2)^{2} + 2(n-D) + 2(n-2)\sqrt{2(n-D)}}\\
	& = n - 4 + (n-2) + \sqrt{2(n-D)}\\
	& = 2n-6 + \sqrt{2(n - D)},
	\end{align*}
	which completes the proof.
\end{proof}

We close the paper by a numerical discussion about the average value of $ D(T) $. Let $ T $ be a random tree of order $ n $, where by \textit{random} we mean a \textit{uniformly} chosen spanning tree of $ K_{n} $. Thanks to Gordon Royle answer\footnote{ https://mathoverflow.net/a/402550/125843}
and using the software SageMath \cite{sage},  we have the following table:
\vskip 2mm

\begin{table}[h]
\centering
\begin{tabular}{| c | c || c | c |}
\hline
n & \textrm{Average value of $D(T)$} & n & \textrm{Average value of $D(T)$} 
\\\hline
6 & 1.5509259259259258 & 15 & 1.8099355316779862\\\hline
7 & 1.601832569762599 & 16 & 1.8342600346899551\\\hline
8 & 1.615203857421875 & 17 & 1.8576535568845751 \\\hline
9 & 1.6481679057505914 & 18 & 1.8801276901061494\\\hline
10 & 1.6749189 & 19 & 1.9017080817999203 \\\hline
11 & 1.7038043317645422 & 20 &  1.9224240041946314\\\hline
12 & 1.7314162738607985 & 21 & 1.9423085461667031 \\\hline
13 & 1.7585077425737015 & 22  & 1.9613962948137142 \\\hline
14 & 1.7846671875609421 & 23  & 1.9797225829436216
\\\hline
\end{tabular}
\caption{The average value of $ D(T) $.}\label{t1}
\end{table}
\begin{figure}[h]
	\centering
	\includegraphics[height=7.5cm]{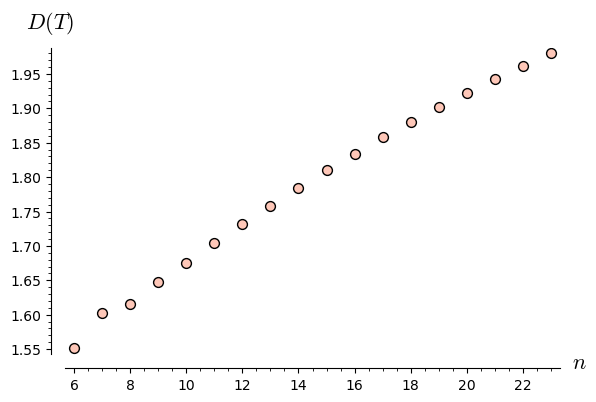}
	\caption{Graph of $D(T) $ in terms of $n$.}
\end{figure}

Table \ref{t1} shows clearly that the average value of $ D(T) $ is much less than $ n $. At the time of writing this paper, we do not know the value of $ \lim\limits_{n \rightarrow \infty} \frac{D(T)}{n} $. We therefore encourage motivated readers to calculate the limit as a future study.


\begin{thebibliography}{9}
 	\bibitem{aekn}
 	S. Akbari, M. Einollahzadeh, M.M. Karkhaneei, M. A. Nematollahi, Proof of a conjecture on the Seidel energy of graphs, European J. Combin. 86 (2020): 103078.
 	\bibitem{gho}
 	E. Ghorbani,  On eigenvalues of Seidel matrices and Haemers' conjecture, Designs, Codes and Cryptography, 84. 1--2 (2017) 189--195.
 	\bibitem{Ham}
 	W.H. Haemers, Seidel switching and graph energy, MATCH Commun. Math. Comput. Chem. 68 (2012) 653--659.
 	\bibitem{obu}
 	M.R. Oboudi, Energy and Seidel energy of graphs, MATCH Commun. Math. Comput. Chem, 75 (2016) 291--303.
 	\bibitem{sage}
 	SageMath, the Sage Mathematics Software System (Version 9.4), The Sage Developers, 2021, http://www.sagemath.org.
 	\bibitem{lint-seidel}
 	J.H. van Lint, J.J. Seidel, Equilateral point sets in elliptic geometry, Indag. Math. 28 (1966) 335--348.
 \end{thebibliography}
\end{document}